\newcommand{\seq}{\subseteq}
\newcommand{\C}{\mathbb{C}}
\newtheorem{thm}{Theorem}[section]
\newtheorem*{thm-nl}{Theorem}
\newtheorem*{prop-nl}{Proposition}
\def\PP{{\mathbf P}}
\def\Pic0{{\rm Pic}^0(X)}
\newtheorem*{cor-nl}{Corollary}
\newtheorem{conjecture}[thm]{Conjecture}
\newtheorem*{conjecture-nl}{Conjecture}
\newtheorem*{quest-nl}{Question}
\newtheorem*{quests-nl}{Questions}
\newtheorem{prop}[thm]{Proposition}
\theoremstyle{remark}
\title{{Linear syzygies of projective space}}
\date{\today}
\author[M. Kemeny]{Michael Kemeny}
\address{University of Wisconsin-Madison, Department of Mathematics, 480 Lincoln Dr
\hfill \newline\texttt{}
 \indent WI 53706, USA} 
 \email{{\tt michael.kemeny@gmail.com}}
\begin{document}
\begin{abstract}
In this paper, we study the Ein--Lazarsfeld Conjecture of syzygies of Veronese varieties $\PP^n$ embedded by $\mathcal{O}_{\PP^n}(d)$. We show a vanishing statement which agrees with the conjecture up to highest and second-highest order for linear syzygies $q=1$.
\end{abstract}

\maketitle
\setcounter{section}{-1}
\section{Introduction}
In this paper, we study syzygies for $\PP^n$ embedded by $\mathcal{O}_{\PP^n}(d)$. The case $n=1$ is known, as then the resolution is an Eagon--Northcott complex \cite[Ch.\ 6]{eisenbud-syzygies}, so we take $n \geq 2$. The Ein--Lazarsfeld Conjecture of syzygies of a Veronese variety, \cite[Theorem B]{ein-lazarsfeld-asymptotic} says 
\begin{conjecture}[Ein--Lazarsfeld] \label{EL-conj}
Fix an index $1 \leq q \leq n$ and $d \geq n+1$, then
$$\mathrm{K}_{p,q}(\PP^n,\mathcal{O}_{\PP^n}(d)) \neq 0$$
if and only if 
$$\binom{d+q}{q}-\binom{d-1}{q}-q\leq p \leq \binom{d+n}{n}-\binom{d+n-q}{n-q}+\binom{n}{n-q}-q-1.$$
\end{conjecture}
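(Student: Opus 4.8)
The plan is to treat the statement as an equality of two subsets of the $(p,q)$-plane: the support of the Betti table of the Veronese and the explicit diamond-shaped region cut out by the two binomial inequalities. I would work entirely $GL(V)$-equivariantly, where $V = \C^{n+1}$ and $\PP^n = \PP(V)$, by computing $\mathrm{K}_{p,q}(\PP^n, \OO_{\PP^n}(d))$ as the cohomology of the Koszul-type complex
\begin{equation*}
\wedge^{p+1} W \otimes \mathrm{Sym}^{(q-1)d} V^{\ast} \to \wedge^{p} W \otimes \mathrm{Sym}^{qd} V^{\ast} \to \wedge^{p-1} W \otimes \mathrm{Sym}^{(q+1)d} V^{\ast},
\end{equation*}
where $W = \mathrm{Sym}^{d} V^{\ast}$ has dimension $\binom{d+n}{n}$. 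Since every term is a $GL(V)$-representation and the differentials are equivariant, each $\mathrm{K}_{p,q}$ is a direct sum of Schur functors $\mathbb{S}_{\lambda} V^{\ast}$, and the whole problem reduces to deciding which multiplicities survive in cohomology. I would then split the biconditional into the vanishing statement (the ``only if'', bounding the support from outside, for every $1 \leq q \leq n$) and the nonvanishing statement (the ``if'', filling the entire interval, for every $q$).

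For the vanishing direction I would first exploit the arithmetic Cohen--Macaulayness of the Veronese ring $R = \bigoplus_m \mathrm{Sym}^{md} V^{\ast}$, which makes its minimal free resolution over $\mathrm{Sym}(W)$ self-dual up to the canonical twist. This gives a duality
\begin{equation*}
\mathrm{K}_{p,q}(\PP^n, \OO_{\PP^n}(d)) \cong \mathrm{K}_{N-n-p,\, n+1-q}(\PP^n, \OO_{\PP^n}(d);\, \omega)^{\vee}, \qquad N+1 = \binom{d+n}{n},
\end{equation*}
which interchanges the two endpoints of the conjectural interval and reduces vanishing above the upper bound to vanishing below the lower bound in complementary weight $n+1-q$. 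The remaining lower vanishing $\mathrm{K}_{p,q} = 0$ for $p < \binom{d+q}{q} - \binom{d-1}{q} - q$ I would extract from a Castelnuovo--Mumford regularity estimate for $R$ together with an equivariant acyclicity argument: once $p$ is small enough, the Schur components that could contribute to the kernel modulo image in $\wedge^{p} W \otimes \mathrm{Sym}^{qd} V^{\ast}$ are forced to cancel, which is a Pieri-rule bookkeeping.

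For the nonvanishing direction the two tasks are endpoint nonvanishing and interior connectedness. At the lower endpoint I would construct an explicit nonzero class — a generalized monomial Koszul cycle whose leading Schur component $\mathbb{S}_{\lambda_0} V^{\ast}$ provably survives, by checking that the same representation does not occur in the adjacent image term; nonvanishing at the upper endpoint then follows from the duality above. To fill the open interval I would prove a no-gaps statement: if $\mathrm{K}_{p,q} \neq 0$ and $\mathrm{K}_{p',q} \neq 0$ with $p < p'$, then $\mathrm{K}_{p'',q} \neq 0$ for all $p < p'' < p'$, approached via a multiplication/contraction map injecting a surviving Schur component at level $p$ into level $p+1$, in the spirit of Green-style connectedness of Betti tables.

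The main obstacle is precisely the effective control at the two endpoints for every $d \geq n+1$, not merely asymptotically. Asymptotic results such as \cite[Theorem B]{ein-lazarsfeld-asymptotic} already guarantee nonvanishing in an interval as $d \to \infty$, but pinning the exact binomial endpoints requires showing that one distinguished Schur functor genuinely survives in cohomology there, and the decompositions of $\wedge^{p} \mathrm{Sym}^{d} V^{\ast}$ admit no closed form, so the cancellation analysis must be organized around a carefully chosen extremal weight $\lambda$ rather than a full character computation. Equally delicate is the no-gaps step, since equivariant injectivity of the connecting map can fail for isolated $\lambda$; controlling those failures uniformly in $d$ and $q$ is where I expect the argument to be hardest.
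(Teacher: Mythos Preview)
The statement you are attempting to prove is labeled \emph{Conjecture} in the paper, and for good reason: it is open. The paper does not contain a proof of it; rather, it records the special case $n=2$ (Proposition~\ref{plane-thm}), gives a weaker vanishing for $q=1$ (Theorem~\ref{main-thm}) whose bound differs from the conjectural one already in the second-highest-order term in $d$, and reproves the $q=n$ vanishing (Theorem~\ref{q=n-thm}).

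On the nonvanishing (``if'') side, you are re-deriving something already established: Ein and Lazarsfeld proved the full nonvanishing interval in \cite{ein-lazarsfeld-asymptotic}, as the introduction notes. Your endpoint-plus-no-gaps outline is plausible but unnecessary.

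The genuine gap is in your vanishing (``only if'') argument. After the duality reduction you assert that the lower vanishing $\mathrm{K}_{p,q}=0$ for $p$ below the conjectural bound follows from ``a Castelnuovo--Mumford regularity estimate for $R$ together with an equivariant acyclicity argument'' in which Schur components ``are forced to cancel, which is a Pieri-rule bookkeeping.'' This is not a proof; it is a hope. Regularity bounds for the Veronese ring give vanishing outside the coarse range $q\notin\{1,\dots,n\}$, not the sharp binomial endpoints, and there is no known combinatorial mechanism that forces the relevant Schur multiplicities in $\bigwedge^p \mathrm{Sym}^d V^\ast$ to cancel in cohomology at precisely the conjectured $p$. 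Plethysm decompositions of $\bigwedge^p \mathrm{Sym}^d$ are notoriously intractable, and if such bookkeeping worked the conjecture would not still be open.

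The paper's own partial progress on the $q=1$ vanishing proceeds by an entirely different route: it builds Ehbauer--Aprodu projection/evaluation maps from $\mathrm{K}_{p,1}(\PP^n,\mathcal{O}_{\PP^n}(d))$ into $\mathrm{K}_{p-s,1}(\PP^n,\mathcal{O}_{\PP^n}(-1);\mathcal{O}_{\PP^n}(d-1))$ with $s=h^0(\mathcal{O}_{\PP^{n-1}}(d))$, and then kills the target via Green's Vanishing Theorem. This yields $\mathrm{K}_{p,1}=0$ for $p\geq\binom{d+n-1}{n}+\binom{d+n-2}{n-2}$, which the paper explicitly remarks is optimal for the projection method yet still strictly weaker than Conjecture~\ref{EL-conj-linear} for $n\geq 3$.
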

The nonvanishing direction of the statement was shown in \cite{ein-lazarsfeld-asymptotic}, so we focus on the vanishing, which was previously considered by Ottaviani-Paoletti \cite{ott-pao}. Syzygies of Veronese varieties has been much studied in recent years, see for example \cite{park}, \cite{erman-et-al}, \cite{raicu}.\\

We first remark that existing results of Aprodu \cite{aprodu-higher}, Green \cite{green-koszul} and Birkenhake \cite{birkenhake} give Conjecture \ref{EL-conj} in the case $n=2$.
\begin{prop}[Aprodu, Green, Birkenhake]  \label{plane-thm}
Conjecture \ref{EL-conj} holds in the case $n=2$.
\end{prop}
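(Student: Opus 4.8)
The plan is to split the computation into the two non-zero rows of the Betti table. Since $v_{d}(\PP^{2})$ is projectively normal with Castelnuovo--Mumford regularity $2$, the only groups that can be non-zero are $\mathrm{K}_{p,1}$ and $\mathrm{K}_{p,2}$. Granting the non-vanishing half of Conjecture \ref{EL-conj} (\cite{ein-lazarsfeld-asymptotic}), and noting that $\mathrm{K}_{0,q}=0$ for $q\geq 1$ (projective normality) and that $\mathrm{K}_{p,q}=0$ for $p>\binom{d+2}{2}-3$ (the resolution of the coordinate ring, Cohen--Macaulay of codimension $\binom{d+2}{2}-3$, has that length), what remains to be shown is
\[
\mathrm{K}_{p,1}\big(\PP^{2},\OO_{\PP^{2}}(d)\big)=0\ \ \big(p\geq \tbinom{d+2}{2}-d\big),\qquad \mathrm{K}_{p,2}\big(\PP^{2},\OO_{\PP^{2}}(d)\big)=0\ \ (p\leq 3d-3).
\]

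The first move I would make is to pass to a general hyperplane section. It is standard that the graded Betti numbers of an arithmetically Cohen--Macaulay variety coincide with those of a general hyperplane section; and a general hyperplane $H\subset\PP^{N}$, $N=\binom{d+2}{2}-1$, pulls back under $v_{d}$ to a smooth plane curve $C\in|\OO_{\PP^{2}}(d)|$, with $v_{d}(\PP^{2})\cap H$ the image of $C$ in $H\cong\PP^{N-1}$ under the complete linear system of $L:=\OO_{\PP^{2}}(d)|_{C}$. Hence $\dim\mathrm{K}_{p,q}(\PP^{2},\OO_{\PP^{2}}(d))=\dim\mathrm{K}_{p,q}(C,L)$ for all $p,q$. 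By adjunction $\omega_{C}\cong\OO_{\PP^{2}}(d-3)|_{C}$, so $L$ is non-special of degree $d^{2}=2g+3d-2$ with $g=\binom{d-1}{2}$, and $h^{0}(L)=N$. Everything is thus reduced to the two displayed vanishings for $(C,L)$.

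The row $q=2$ is the easy one. The assertion $\mathrm{K}_{p,2}(C,L)=0$ for $p\leq 3d-3$ is exactly property $N_{3d-3}$ for $C\subset\PP^{N-1}$, and since $\deg L=2g+3d-2=2g+1+(3d-3)$ this is the borderline instance of Green's theorem that a line bundle of degree $\geq 2g+1+p$ on a smooth curve satisfies $N_{p}$ (\cite{green-koszul}); this is the route used by Ottaviani--Paoletti (\cite{ott-pao}) for the top row when $n=2$, and the bound is sharp, in agreement with the non-vanishing of $\mathrm{K}_{3d-2,2}$.

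For the row $q=1$ what is required is $\mathrm{K}_{p,1}(C,L)=0$ for $p\geq h^{0}(L)-\mathrm{gon}(C)$. A smooth plane curve of degree $d$ has gonality $d-1$, so $h^{0}(L)-\mathrm{gon}(C)=\binom{d+2}{2}-d$, and the desired vanishing is precisely the one predicted by the Green--Lazarsfeld gonality conjecture for $(C,L)$. Since $\deg L=d^{2}$ is not large relative to $g$, the asymptotic form of that conjecture does not apply; instead one uses the refined results of Aprodu (\cite{aprodu-higher}) on linear syzygies of gonal curves, the pertinent Brill--Noether hypotheses being verified against the classical description of the special pencils on a smooth plane curve, or else appeals directly to Birkenhake's explicit computation of the linear strand $\mathrm{K}_{\bullet,1}(\PP^{2},\OO_{\PP^{2}}(d))$ (\cite{birkenhake}). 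I expect this row to be the crux: the relevant line bundle, equivalently the linear strand of $v_{d}(\PP^{2})$, lies exactly at the edge of the range in which linear-syzygy vanishing is known, so no asymptotic tool suffices, and one must keep precise track of the Brill--Noether geometry of the plane curve, uniformly in $d\geq 3$.
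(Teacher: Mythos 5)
Your proposal is correct, and its skeleton is the same as the paper's: grant the Ein--Lazarsfeld nonvanishing, observe that only the rows $q=1,2$ can be nonzero, and reduce to the two vanishings $\mathrm{K}_{p,2}=0$ for $p\leq 3d-3$ together with $p\geq \binom{d+2}{2}-2$, and $\mathrm{K}_{p,1}=0$ for $p\geq \binom{d+2}{2}-d$. The differences are in how the known inputs are invoked. The paper stays on the Veronese surface throughout: the range $p>\binom{d+2}{2}-3$ is disposed of by the Duality Theorem (your alternative, bounding the length of the resolution by the codimension of the arithmetically Cohen--Macaulay coordinate ring, is an equivalent and equally clean argument), the range $p<3d-2$ in the $q=2$ row is quoted from Green--Birkenhake, and the linear strand is quoted verbatim as \cite[Lemma 5.1]{aprodu-higher}. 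You instead pass to a general hyperplane section, i.e.\ a smooth plane curve $C$ of degree $d$ re-embedded by $\mathcal{O}_C(d)$ of degree $2g+3d-2$, which turns the $q=2$ lower-range vanishing into the borderline case of Green's $2g+1+p$ theorem and the $q=1$ vanishing into a gonality-type statement for a curve of gonality $d-1$; this is legitimate (the Veronese is ACM, or one can use Green's Lefschetz theorem as the paper does elsewhere), and it makes transparent why $3d-2$ and $\binom{d+2}{2}-d$ are the correct thresholds. The one place where you are vaguer than the paper is the crux, the $q=1$ row: you gesture at Aprodu's gonality results ``with the Brill--Noether hypotheses to be verified'' or at Birkenhake's computation, whereas the needed statement is exactly Aprodu's Lemma 5.1 (itself proved by the projection method for plane curves), so your appeal lands on the same source; note also that the paper attributes the linear strand to Aprodu and reserves Birkenhake for the $q=2$ row. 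In short: same decomposition and same external inputs, with your curve-level framing buying conceptual clarity about the numerology at the cost of an extra (standard) hyperplane-section reduction.
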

We then show how the vanishings from this case generalize to statements about the Ein--Lazarsfeld Conjecture for $q=1$ and $q=n$, for all $n \geq 2$.
In the case $q=1$ of \emph{linear} syzygies, which have been studied in, for instance \cite{lin-syz} and \cite{ein-lazarsfeld}, the conjecture predicts the following vanishing:
\begin{conjecture} \label{EL-conj-linear}
Fix $n \geq 2$. For $d \geq n+1$, then
$$\mathrm{K}_{p,1}(\PP^n,\mathcal{O}_{\PP^n}(d)) = 0$$
if 
$$p \geq \binom{d+n-1}{n}+n-1.$$
\end{conjecture}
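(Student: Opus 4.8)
The plan is to prove the vanishing by induction on $n$, taking the case $n=2$ from Proposition~\ref{plane-thm}, and descending from $\PP^n$ to a general hyperplane $H\cong\PP^{n-1}$. It is convenient to work with the kernel bundle $M_d:=M_{\sheafO_{\PP^n}(d)}$, sitting in $0\to M_d\to H^0(\PP^n,\sheafO(d))\otimes\sheafO_{\PP^n}\to\sheafO_{\PP^n}(d)\to 0$: since all higher cohomology of the powers of $\sheafO_{\PP^n}(d)$ vanishes for $d\ge n+1$, one has $\mathrm{K}_{p,1}(\PP^n,\sheafO_{\PP^n}(d))\cong H^1(\PP^n,\wedge^{p+1}M_d)$, with analogous descriptions for the twisted and higher-weight groups under the corresponding vanishing. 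Restricting to $H$ and comparing $M_d|_H$ with the kernel bundle $M_d':=M_{\sheafO_H(d)}$ of $\PP^{n-1}$ produces an extension
\[
0\longrightarrow\sheafO_H^{\,\oplus c}\longrightarrow M_d|_H\longrightarrow M_d'\longrightarrow 0,\qquad c=h^0(\PP^n,\sheafO(d))-h^0(\PP^{n-1},\sheafO(d))=\binom{d+n-1}{n},
\]
and the cohomology sequence of $0\to\wedge^{p+1}M_d(-1)\to\wedge^{p+1}M_d\to\wedge^{p+1}(M_d|_H)\to 0$, combined with the filtration of $\wedge^{p+1}(M_d|_H)$ whose graded pieces are $\wedge^{i}\C^{c}\otimes\wedge^{p+1-i}M_d'$, reduces the desired vanishing of $\mathrm{K}_{p,1}(\PP^n,\sheafO(d))$ to two tasks: first, the vanishing of the \emph{linear} groups $\mathrm{K}_{p-i,1}(\PP^{n-1},\sheafO(d))$ for the $i$ that occur, which is the inductive hypothesis up to a shift in $p$ governed by $c$; and second, the vanishing of the twisted group $\mathrm{K}_{p,1}(\PP^n,\sheafO(-1);\sheafO(d))=H^1(\PP^n,\wedge^{p+1}M_d(-1))$ in the same range.

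For the twisted group I would invoke Green's duality theorem: since $\omega_{\PP^n}=\sheafO(-n-1)$ and the relevant higher cohomology vanishes for $d\ge n+1$, there is a natural isomorphism
\[
\mathrm{K}_{p,q}\!\left(\PP^n,B;\sheafO(d)\right)^{\vee}\;\cong\;\mathrm{K}_{r-n-p,\,n+1-q}\!\left(\PP^n,\,\omega_{\PP^n}\otimes B^{-1};\,\sheafO(d)\right),\qquad r+1=\binom{d+n}{n},
\]
so that $\mathrm{K}_{p,1}(\PP^n,\sheafO(-1);\sheafO(d))$ becomes the dual of a group on the $q=n$ strand of $\PP^n$. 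Restricting that one to $H$ in turn: on $\PP^{n-1}$ the corresponding $q=n$ group is twisted by $\omega_{\PP^{n-1}}$, and a direct computation — or Green's duality on $\PP^{n-1}$, which identifies it with a piece of $\mathrm{K}_{\bullet,0}(\PP^{n-1},\sheafO;\sheafO(d))$ — shows it is concentrated in a single homological degree, hence vanishes throughout the range we need. The surviving ``kernel'' contribution is $\mathrm{K}_{\bullet,n}(\PP^n,\sheafO(-n-1);\sheafO(d))$, and dualising once more returns $\mathrm{K}_{\bullet,1}(\PP^n,\sheafO(d))$ at a value of $p$ close to $r$, where the elementary exterior-power estimate — repeatedly using $0\to\sheafO(-d)\otimes\wedge^{k-1}M_d^{\vee}\to\wedge^{k}H^0(\sheafO(d))^{\vee}\otimes\sheafO\to\wedge^{k}M_d^{\vee}\to 0$ together with $H^{j}(\PP^n,\sheafO(-jd))=0$ for $0<j<n$ — forces acyclicity. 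Assembling these threads into one coherent induction, simultaneously on $n$ and on $p$, together with the surjectivity of the multiplication maps $H^0(\sheafO(a))\otimes H^0(\sheafO(b))\twoheadrightarrow H^0(\sheafO(a+b))$ on $\PP^n$, is the body of the argument.

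The main obstacle is the sharpness of this bookkeeping. The top of the linear strand of $\PP^n$ is Green-dual to the \emph{bottom} of the $q=n$ strand of the \emph{same} $\PP^n$, so the recursion above is genuinely circular and terminates only once one tracks the shift in $p$ incurred at every step; but each appeal to the restriction sequence costs at least the rank $c=\binom{d+n-1}{n}$ of the trivial subbundle of $M_d|_H$, plus the degrees of the connecting maps, and each passage through Green's duality reflects the admissible range about its midpoint, so these shifts do not cancel — they accumulate over the $n-2$ steps of the induction. Bounding the accumulation by the crude estimates available here yields a threshold of the form
\[
p\;\ge\;\binom{d+n-1}{n}+O_n\!\left(d^{\,n-2}\right),
\]
which matches the conjectural bound $\binom{d+n-1}{n}+n-1$ in its two highest-order terms as a polynomial in $d$, but leaves the remaining terms — in particular the exact constant $n-1$ — out of reach. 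Closing this gap would require either a restriction that loses precisely the span codimension and no more, or a sharp acyclicity theorem for the twisted exterior powers $\wedge^{\bullet}M_d(-k)$ exactly at the boundary of their vanishing range; supplying either is the content of the full Conjecture~\ref{EL-conj-linear}, and is what this method stops just short of.
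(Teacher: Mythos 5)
You have not proved the statement, and you say so yourself: your final bound is $\binom{d+n-1}{n}+O_n(d^{\,n-2})$, not $\binom{d+n-1}{n}+n-1$. That is consistent with the paper, where this statement is left as a conjecture (it is simply the $q=1$ specialization of the Ein--Lazarsfeld Conjecture \ref{EL-conj}, obtained by Pascal's rule); the paper itself only proves the weaker Theorem \ref{main-thm}, with threshold $\binom{d+n-1}{n}+\binom{d+n-2}{n-2}$, which likewise agrees with the conjectural bound only in the two highest-order terms in $d$. So as a proof of Conjecture \ref{EL-conj-linear} your proposal is incomplete by its own admission, and the ``bookkeeping'' you hope to tighten is exactly the part nobody knows how to do.

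Beyond that concession, the reduction you sketch has a concrete flaw. From $0\to\bigwedge^{p+1}M_d(-1)\to\bigwedge^{p+1}M_d\to\bigwedge^{p+1}(M_d|_H)\to 0$ you need, besides the twisted vanishing, the vanishing of $\mathrm{H}^1(\bigwedge^{p+1}(M_d|_H))$. But the filtration coming from $0\to\mathcal{O}_H^{\oplus c}\to M_d|_H\to M_d'\to 0$ has graded pieces $\bigwedge^{i}\C^{c}\otimes\bigwedge^{p+1-i}M_d'$ for \emph{all} $0\le i\le\min(p+1,c)$, so ``the $i$ that occur'' include large $i$, where $\mathrm{H}^1(\bigwedge^{p+1-i}M_d')\simeq \mathrm{K}_{p-i,1}(\PP^{n-1},\mathcal{O}(d))$ sits in the low, genuinely nonvanishing part of the linear strand of $\PP^{n-1}$ (already $i=p-1$ gives $\mathrm{K}_{1,1}\neq 0$). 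The inductive hypothesis therefore does not kill these terms, and the restricted group itself is not expected to vanish, so the long exact sequence cannot close the induction on $n$ as stated. The paper's route around this is different in kind: it does not try to annihilate the whole restricted/quotient group, but uses the evaluation--projection machinery of Section 1 together with Aprodu's theorem that a nonzero class in $\mathrm{K}_{p,1}(\PP^n,\mathcal{O}(d))$ has nonzero image under $\mathrm{ev}_{p,D}$ for $D\in|\mathcal{O}(1)|$ and $s$ general points; that image lands in the image of $\mathrm{H}^1(\bigwedge^{p+1-s}M_{\mathcal{O}(d-1)}(-1))\simeq \mathrm{K}_{p-s,1}(\PP^n;\mathcal{O}(-1),\mathcal{O}(d-1))$, which is killed directly by Green's Vanishing Theorem for $p-s\ge\binom{d-2+n}{n}$ — an argument that descends in $d$ rather than in $n$, and is optimal for that method, which is precisely why it stops at the weaker Theorem \ref{main-thm} rather than reaching the bound you were asked to prove. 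Your duality-and-restriction scheme for the twisted group also remains a sketch (no actual control of the connecting maps or of the accumulated shifts is given), so even the weaker $O_n(d^{\,n-2})$ threshold is not established by the proposal as written.
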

To obtain this from conjecture \ref{EL-conj}, we used Pascal's Triangle
$$\binom{d+n}{n}-\binom{d+n-1}{n-1}=\binom{d+n-1}{n}.$$

In this paper, we prove:
\begin{thm}  \label{main-thm}
Fix $n \geq 3$. For $d \geq 1$, then
$$\mathrm{K}_{p,1}(\PP^n,\mathcal{O}_{\PP^n}(d)) = 0$$
if 
$$p \geq \binom{d+n-1}{n}+\binom{d+n-2}{n-2}.$$
\end{thm}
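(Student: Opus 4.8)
The plan is to convert the statement into a cohomology vanishing for a kernel bundle and then to reduce, by iterated hyperplane restriction, to the plane case already established in Proposition \ref{plane-thm}.

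\emph{Step 1: passage to the kernel bundle.} Set $V:=H^0(\PP^n,\mathcal{O}(d))$ and let $M_d$ be the kernel of the evaluation map $V\otimes\mathcal{O}_{\PP^n}\to\mathcal{O}(d)$. Taking $(p{+}1)$-st exterior powers of the Euler-type sequence $0\to M_d\to V\otimes\mathcal{O}\to\mathcal{O}(d)\to 0$ and splicing the resulting short exact sequences yields a resolution
$$0\to\wedge^{p+1}M_d\to\wedge^{p+1}V\otimes\mathcal{O}\to\wedge^{p}V\otimes\mathcal{O}(d)\to\cdots\to V\otimes\mathcal{O}(pd)\to\mathcal{O}((p+1)d)\to 0.$$
Because $H^i(\PP^n,\mathcal{O}(kd))=0$ for $i>0$ and $k\geq 0$, the hypercohomology spectral sequence of this complex degenerates and one identifies $H^s(\PP^n,\wedge^{p+1}M_d)\cong\mathrm{K}_{p+1-s,s}(\PP^n,\mathcal{O}(d))$; in particular $\mathrm{K}_{p,1}(\PP^n,\mathcal{O}(d))\cong H^1(\PP^n,\wedge^{p+1}M_d)$. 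Thus Theorem \ref{main-thm} becomes: $H^1(\PP^n,\wedge^{p+1}M_d)=0$ for $p\geq\binom{d+n-1}{n}+\binom{d+n-2}{n-2}$. The case $d=1$ is immediate, since $M_1=\Omega^1_{\PP^n}(1)$ and Bott vanishing gives $H^1(\PP^n,\Omega^{p+1}_{\PP^n}(1))=0$; so assume $d\geq 2$ and induct on $n$, the base case $n=2$ being Proposition \ref{plane-thm}.

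\emph{Step 2: hyperplane restriction.} Fix a linear $\PP^{n-1}\hookrightarrow\PP^n$. Restricting the sequence for $M_d$ and noting that $V\to H^0(\PP^{n-1},\mathcal{O}(d))$ is surjective with kernel $W$ of dimension $\binom{d+n-1}{n}$ gives
$$0\to W\otimes\mathcal{O}_{\PP^{n-1}}\to M_d|_{\PP^{n-1}}\to M'_d\to 0,$$
where $M'_d$ is the kernel bundle of $\mathcal{O}_{\PP^{n-1}}(d)$. Now iterate the restriction sequences $0\to\wedge^{p+1}M_d(-k-1)\to\wedge^{p+1}M_d(-k)\to\wedge^{p+1}(M_d|_{\PP^{n-1}})(-k)\to 0$: since $H^1(\PP^n,\wedge^{p+1}M_d(-k))\cong H^0(\PP^n,\wedge^{p}M_d(d-k))$ vanishes once $k>d$, the vanishing of $H^1(\PP^n,\wedge^{p+1}M_d)$ follows once one controls $H^1(\PP^{n-1},\wedge^{p+1}(M_d|_{\PP^{n-1}})(-k))$ for $0\leq k\leq d$. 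Feeding in the filtration on $\wedge^{p+1}(M_d|_{\PP^{n-1}})$ with graded pieces $\wedge^{j}W\otimes\wedge^{p+1-j}M'_d$, this in turn is governed by the groups $H^1(\PP^{n-1},\wedge^{m}M'_d(-k))$ for $0\leq m\leq p+1$, $0\leq k\leq d$. For $k=0$ these are $\mathrm{K}_{m-1,1}(\PP^{n-1},\mathcal{O}(d))$, where the inductive hypothesis applies in the top range of $m$; for $k\geq 1$ they are twisted Koszul cohomology groups of $\PP^{n-1}$, which by the argument of Step 1 together with Serre duality and exterior-power duality ($\det M'_d=\mathcal{O}(-d)$) can be rewritten in terms of \emph{both} the linear strand ($q=1$) and the top strand ($q=n-1$) of the Koszul cohomology of $\PP^{n-1}$, so that at the bottom of the induction one invokes the full statement of Conjecture \ref{EL-conj} on $\PP^2$ provided by Proposition \ref{plane-thm}. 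It is here that it is natural to run the whole argument on the Serre-dual side — proving the companion vanishing for $\mathrm{K}_{p,n}$, which restricts cleanly to the $q=n-1$ top strand on a hyperplane — and then return to $q=1$ via Green's duality on $\PP^n$; this should be the cleanest way to organize the bookkeeping.

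\emph{Main obstacle.} The termwise estimates above are lossy: the linear syzygies of the lower Veroneses $\PP^m$ ($2\leq m<n$) are genuinely nonzero in a range, so the graded pieces $\wedge^{j}W\otimes\wedge^{p+1-j}M'_d(-k)$ need not all have vanishing $H^1$, and a naive dimension count does not eliminate them. The crux is therefore to retain, rather than discard, the connecting homomorphisms in the restriction long exact sequences — equivalently, to analyze the spectral sequence of the exterior-power filtration — and to show that the potentially surviving classes are in fact killed. I expect this cancellation analysis, carried out against the precise shape of the Ein--Lazarsfeld ranges on $\PP^2$, to be the technical heart of the proof, and to be exactly what forces the bound $\binom{d+n-1}{n}+\binom{d+n-2}{n-2}$ rather than the conjectural $\binom{d+n-1}{n}+n-1$.
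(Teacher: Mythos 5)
Your proposal stops short of a proof at precisely the point you flag yourself: after restricting to a hyperplane and filtering $\wedge^{p+1}(M_d|_{\PP^{n-1}})$ with graded pieces $\wedge^{j}W\otimes\wedge^{p+1-j}M'_d(-k)$, the termwise $H^1$'s do not vanish (the lower Veroneses have genuinely nonzero linear and twisted syzygies in the relevant range), and the ``cancellation analysis'' of connecting maps that you defer is exactly the missing argument --- nothing in the proposal indicates how it would be carried out, nor why it would produce the specific bound $\binom{d+n-1}{n}+\binom{d+n-2}{n-2}$ rather than some weaker one. As written, Steps 1--2 are standard bookkeeping (the identification $\mathrm{K}_{p,1}\simeq H^1(\wedge^{p+1}M_d)$, the restriction sequences, the filtration), and the theorem would follow only if the acknowledged obstacle were overcome; so there is a genuine gap, not merely an alternative route.

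For contrast, the paper does not induct on $n$ and never uses the plane case in the proof of Theorem \ref{main-thm}. It takes $D\in|\mathcal{O}_{\PP^n}(1)|$, $N=\mathcal{O}(d)$, $L=\mathcal{O}(d-1)$, and $s=h^0(\mathcal{O}_{\PP^{n-1}}(d))$, and uses the evaluation/projection map on syzygies (a composite of $s$ one-point evaluation maps). The two nontrivial inputs are: (i) Aprodu's result that the evaluation map is \emph{nonzero} on a nonzero class of $\mathrm{K}_{p,1}(\PP^n,\mathcal{O}(d))$, so nonvanishing propagates; and (ii) the identification of the target with the twisted Koszul group $\mathrm{K}_{p-s,1}(\PP^n;\mathcal{O}(-1),\mathcal{O}(d-1))\simeq H^1(\wedge^{p+1-s}M_{\mathcal{O}(d-1)}(-1))$, which Green's Vanishing Theorem kills once $p-s\geq\binom{d+n-2}{n}$. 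Unwinding $s=\binom{d+n-1}{n-1}$ via Pascal's triangle gives the stated bound in one step. If you want to salvage your approach, you would need a substitute for input (i) --- some mechanism forcing a surviving class in your spectral sequence to be detected after restriction --- and that is precisely what the projection formalism supplies and your hyperplane-restriction filtration does not.
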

Notice that the Theorem holds for any $d \geq 1$, whereas conjecture \ref{EL-conj-linear} requires $d \geq n+1$. Indeed, the Theorem for $d=1$ is very well known \cite[Proposition 1.5]{aprodu-nagel}, whereas for $n=2$, the result of conjecture \ref{EL-conj-linear} was already known for $d\geq 2$, \cite[Lemma 5.1]{aprodu-higher}. The nonvanishing arguments, on the other hand, in conjecture \ref{EL-conj}, require $d\geq n+1$. Also note that theorem  \ref{main-thm} requires $n\geq 3$ to even be stated.\\

 The bound in Theorem \ref{main-thm} is in general weaker than in the conjecture, but they only differ for terms of order less or equal to the third highest most order in $d$. For instance, for $n=3$, Theorem \ref{main-thm} states $\mathrm{K}_{p,1}(\PP^n,\mathcal{O}_{\PP^n}(d)) = 0$ for $p \geq \binom{d+2}{3}+d+n-1$ whereas the conjecture is $\mathrm{K}_{p,1}(\PP^n,\mathcal{O}_{\PP^n}(d)) = 0$ for $p \geq \binom{d+2}{3}+2$ (for $d\geq 4$).\\

 The proof of Theorem \ref{main-thm} uses the projection approach of Ehbauer \cite{ehbauer} and Aprodu \cite{aprodu-higher} to show Theorem \ref{main-thm} by induction on $d$, reduces to the trivial case $d=1$. The projection approach to syzygies has been used in \cite{projecting} and \cite{choi-kang-kwak}. As we explain in our proof, our result is optimal for our method of projection.\\

\textbf{Acknowledgements} I thank G.\ Farkas for a discussion on this topic. The author was supported by NSF grant DMS-2100782 and a Vilas Life Cycle award.
\smallskip

\section{Construction of Projection of Syzygies}
We begin with a construction of the projection and evaluation maps on syzygies, as in \cite[\S 2.2]{aprodu-nagel}, however we give it in more generality. We first construct and define the \emph{evaluation} map rather than the projection map, which may be constructed without an added assumption used in our construction of the projection map.
Let $X$ be a smooth, complex variety, let $N$ be a base-point free line bundle on $X$, and let $D \seq X$ be an effective divisor and assume further that the evaluation map
$$\mathrm{H}^0(X,N) \to \textrm{H}^0(N_{|_D})$$
is surjective. Let $L:=N(-D)$. Assume $L$ is also base point free on $X$. Consider the \emph{kernel bundles} $M_L$ and $M_N$ as in \cite[\S 2.1]{aprodu-nagel}, which fit into a commutative diagram
{\small{$$\begin{tikzcd}
& 0 \arrow[d] & 0 \arrow[d]  & 0 \arrow[d] \\
0 \arrow[r] & M_{L} \arrow[r] \arrow[d] & \mathrm{H}^0(X,L) \otimes \mathcal{O}_X \arrow[r] \arrow[d] &  L=N(-D) \arrow[r] \arrow[d] &0\\
0 \arrow[r] & M_N \arrow[r] \arrow[d] & \mathrm{H}^0(X,N) \otimes \mathcal{O}_X  \arrow[r] \arrow[d]  &  N \arrow[r] \arrow[d]& 0\\
0 \arrow[r] & \Gamma \arrow[r] \arrow[d] & \textrm{H}^0(N_{|_D})  \otimes \mathcal{O}_X  \arrow[r] \arrow[d]  &  N_{|_D}  \arrow[r] \arrow[d] & 0\\
& 0 & 0 & 0 &
\end{tikzcd},$$}}where $\Gamma$ is a vector bundle (since $D$ is a divisor) of rank $s:=\textrm{h}^0(N_{|_D})=\dim \textrm{H}^0(N_{|_D}).$ In particular we have the exact sequence of vector bundles
$$0 \to  M_{L} \to M_N \to \Gamma \to 0,$$
to which we can apply \cite[II. Exercise 5.16]{hartshorne}. For any integer $p>s$, we have an exact sequence of sheaves
$$0 \to F \to \bigwedge^p M_N \to \bigwedge^{p-s} M_L(-D)\to 0,$$
where $F$ is a coherent sheaf, using $\bigwedge^s \Gamma \simeq \mathcal{O}_X(-D).$ We have natural inclusions $M_L(-D) \seq M_L \seq M_N$, and so the surjective morphism $\bigwedge^p M_N \to \bigwedge^{p-s} M_L(-D)$ induces a morphism of sheaves $$ t_p\; : \; \bigwedge^p M_N \to \bigwedge^{p-s} M_N.$$ Up to scalar, the morphism $t_p$ can be described as such. We have a natural inclusion $\bigwedge^p M_N \seq \bigwedge^p \mathrm{H}^0(N) \otimes \mathcal{O}_X$, and there is a map
\begin{align*}
\alpha_p \; : \; \bigwedge^p \mathrm{H}^0(N)\otimes \mathcal{O}_X &\to \bigwedge^{p-s} \mathrm{H}^0(N) \otimes \mathcal{O}_X \\
v_1 \wedge \ldots \wedge v_p &\mapsto \sum_{\{i_1, \ldots, i_s\} \seq\{1, \ldots,p\}} (-1)^{i_1+\ldots+i_s}v_1 \wedge \ldots \wedge \hat{v}_{i_1} \wedge \ldots \wedge \hat{v}_{i_s}\wedge \ldots \wedge v_p \otimes \gamma(v_{i_1}\wedge \ldots \wedge_{i_s}),
\end{align*}
for $\gamma: \bigwedge^{s} \mathrm{H}^0(N)\otimes \mathcal{O}_X\to  \bigwedge^{s}\textrm{H}^0(N_{|_D}) \otimes \mathcal{O}_X\simeq \mathcal{O}_X$ (which is only uniquely defined up to scalar) and $\alpha_p (\bigwedge^p M_N) \seq \bigwedge^{p-s} M_N$. We have $t=\alpha_{|_{\bigwedge^p M_N}}$. After choosing the sign of the scalar defining $\gamma$ correctly, we have a commutative diagram 
{\small{$$\begin{tikzcd}
 \bigwedge^{p+1} \mathrm{H}^0(N)\; \; \; \arrow[d, "\beta_p"] \arrow[r, "\mathrm{H}^0(\alpha_{p+1})"] & \; \; \; \;  \bigwedge^{p+1-s} \mathrm{H}^0(N) \arrow[d, "\beta_{p-s}"]  & \\
  \mathrm{H}^0(\bigwedge^{p} M_{N}(N))\; \; \;\arrow[r, "\mathrm{H}^0(t_{p}\otimes \mathrm{id}_N)"]  & \;  \;  \; \; \mathrm{H}^0(\bigwedge^{p-s} M_{N}(N)) 
\end{tikzcd},$$}}
where the vertical maps are induced from taking global sections of the exact sequence
$$0 \to \bigwedge^i M_N \to  \bigwedge^i \mathrm{H}^0(N)\otimes \mathcal{O}_X \xrightarrow{b_i} \bigwedge^{i-1} M_N\otimes N \to 0,$$
for $i \in \{p+1, p+1-s \}$. Similarly to \cite[Remark 1.3]{aprodu-nagel}, the morphism of sheaves $b_i$ can be described as
$$v_1 \wedge \ldots \wedge v_i \mapsto \sum_j (-1)^j v_1 \wedge \ldots \wedge \hat{v}_j \wedge \ldots \wedge v_i \otimes \mathrm{ev}(v_j),$$
where $\mathrm{ev}$ is the evaluation morphism 
$$ \mathrm{ev} \; : \; \mathrm{H}^0(N) \otimes \mathcal{O}_X \to N.$$ From the commutative diagram, we get a morphism $$\mathrm{ev}_{p,D} \; : \; \mathrm{K}_{p,1}(X,N) \simeq \mathrm{Coker}(\beta_p) \to  \mathrm{K}_{p-s,1}(X,N) \simeq \mathrm{Coker}(\beta_p),$$
called the \emph{evaluation map}. When $s=1$, it coincides with the map defined in \cite[\S 2.2]{aprodu-nagel}. Up to scalar, it is the composition of $s$ evaluation maps
$$\mathrm{K}_{p-j,1}(X,N) \to \mathrm{K}_{p-j-1,1}(X,N), \; j \in \{1, \ldots, s\},$$
as in  \cite[\S 2.2]{aprodu-nagel}, with the morphism
$$\mathrm{H}^0(X,N) \to \mathrm{H}^0(X,N_{|_D})\simeq \C^s \to \C,$$
induced by the projection to the $s$ coordinates of $\C^s$.\\

In the case where, in addition $\mathrm{H}^1(\mathcal{O}_X)=0$, we can construct more. Then $\mathrm{K}_{p,1}(X,N) \simeq \mathrm{H}^1(\bigwedge^{p+1}M_N)$ and $\mathrm{H}^1(t_{p+1})$ induces, up to sign, the evaluation map
$$ \mathrm{K}_{p,1}(X,N)  \to  \mathrm{K}_{p-s,1}(X,N) .$$
Indeed we have a commutative diagram
{\small{$$\begin{tikzcd}
0 \arrow[r] &\bigwedge^{p+1} M_N \arrow[r] \arrow[d, "t_{p+1}"] &\bigwedge^{p+1} \mathrm{H}^0(N)\otimes \mathcal{O}_X \arrow[r, "b_{p+1}"] \arrow[d, "\alpha_{p+1}"]  &\bigwedge^{i-1} M_N\otimes N \arrow[r] \arrow[d, "t_p\otimes\mathrm{id}"] &0 \\
0 \arrow[r]  &\bigwedge^{p+1-s} M_N \arrow[r] &\bigwedge^{p+1-s} \mathrm{H}^0(N)\otimes \mathcal{O}_X \arrow[r, "b_{p+1-s}"]  &\bigwedge^{i-1} M_N\otimes N \arrow[r] &0 
\end{tikzcd}$$}}with exact rows, and then the claim follows from standard results about the functoriality of cohomology. As we saw above, the image of $t_{p+1}$ lands in $\bigwedge^{p+1-s}M_L(-D)$, so the evaluation map factors through a map
$$\textrm{pr}_{p,D} \; : \;  \\  \mathrm{K}_{p,1}(X,N)  \to  \mathrm{K}_{p-s,1}(X,L),$$
called the \emph{projection map} (this can be seen to coincide with the projection map of \cite{aprodu-nagel}, when $s=1$ and $M$ is the coordinate ring, using Ehbauer's theorem \cite{aprodu-higher}). Further, the image of $\textrm{pr}_{p,D}$ is contained in the image of $ \mathrm{H}^1(\bigwedge^{p+1-s}M_L(-D))) \to  \mathrm{H}^1(\bigwedge^{p+1-s}M_L) \simeq \mathrm{K}_{p-s,1}(X,L)$.

\section{Linear syzygies}
We first prove Proposition \ref{plane-thm} for the projective plane, i.e.\ $n=2$. For $q=2$ and $n=2$, the vanishing direction of the  Ein--Lazarsfeld Conjecture is equivalent to two statements, both known. The first is
$$\mathrm{K}_{p,2}(\PP^2,\mathcal{O}_{\PP^2}(d)) = 0$$
for $p > \binom{d+2}{2}-3$.\\

 By the Duality Theorem, \cite{green-koszul}, this statement is equivalent to 
$$\mathrm{K}_{\binom{d+2}{2}-3-p,1}(\PP^2,\mathcal{O}_{\PP^2}(-3);\mathcal{O}_{\PP^2}(d)) = 0$$
for $p > \binom{d+2}{2}-3$ which is trivial. The second statement is 
$$\mathrm{K}_{p,2}(\PP^2,\mathcal{O}_{\PP^2}(d)) = 0$$
for $p < 3d-2$, which is a result of Green and Birkenhake \cite{birkenhake}, with a statement generalized to a conjecture on condition $N_p$ for all $n$ by Ottaviani--Paoletti \cite{ottaviani-paoletti}.\\

So then, to prove the remainder of Conjecture \ref{EL-conj} in the case $n=2$, we must proving a vanishing of linear syzygies, namely we must prove the statement
$$\mathrm{K}_{p,1}(\PP^2,\mathcal{O}_{\PP^2}(d))=0,$$
for $d\geq 3$ and $p \geq \frac{1}{2}(d+2)(d+1)-d.$ 
\begin{proof}
As explained in the introduction, we need to show
$$\mathrm{K}_{p,1}(\PP^2,\mathcal{O}_{\PP^2}(d))=0,$$
for $d\geq 3$ and $p \geq \frac{1}{2}(d+2)(d+1)-d.$ This is \cite[Lemma 5.1]{aprodu-higher}.
\end{proof}

We now prove Theorem \ref{main-thm}.
\begin{proof}[Proof of Theorem \ref{main-thm}]
Fix an integer $n \geq 3$ and fix $d \geq 1$. We will apply the construction in the last section for $N=\mathcal{O}_{\PP^n}(d)$ and $D\in |\mathcal{O}_{\PP^n}(1)|$. Set $$s=h^0(\mathcal{O}_{\PP^n}(d))-h^0(\mathcal{O}_{\PP^n}(d-1))=h^0(\mathcal{O}_{\PP^{n-1}}(d))$$
by Pascal's triangle and consider $s$ general points $p_1, \ldots, p_s \in D$. Let $H \seq \PP(\mathrm{H}^0(\mathcal{O}_{\PP^n}(d)))$ be a hyperplane containing $p_1, \ldots, p_s$. 
As seen in the last section, we can view the evaluation map $$\mathrm{ev}_{p,D}: \mathrm{K}_{p,1}(\PP^n, \mathcal{O}_{\PP^n}(d)) \to \mathrm{K}_{p-s,1}(\PP^n, \mathcal{O}_{\PP^n}(d)) $$
as a composite of $s$ evaluation maps. By Aprodu \cite[\S 2, \S 3]{aprodu-higher}, $\mathrm{ev}_{p,D}$ is nonzero if we assume $ \mathrm{K}_{p,1}(\PP^n, \mathcal{O}_{\PP^n}(d))\neq 0$.  Since $\mathrm{H}^1(\mathcal{O}_{\PP^n})=0$, the image of $\mathrm{ev}_{p,D}$ is contained in the image of
$$\mathrm{H}^1(\bigwedge^{p+1-s}M_{\mathcal{O}_{\PP^n}(d-1)}(-1)) \to  \mathrm{K}_{p-s,1}(\PP^n, \mathcal{O}_{\PP^n}(d)) .$$
So $\mathrm{K}_{p,1}(\PP^n, \mathcal{O}_{\PP^n}(d)) \neq 0 \implies \mathrm{H}^1(\bigwedge^{p+1-s}M_{\mathcal{O}_{\PP^n}(d-1)}(-1)) \neq 0.$ But $\mathrm{H}^1(\mathcal{O}_{\PP^n}(-1)=0$, so $$ \mathrm{H}^1(\bigwedge^{p+1-s}M_{\mathcal{O}_{\PP^n}(d-1)}(-1)) \simeq \mathrm{K}_{p-s,1}(\PP^n; \mathcal{O}(-1), \mathcal{O}(d-1)),$$
by \cite[Remark 2.6]{aprodu-nagel} \footnote{Note that there is a typo in this book, the right hand side should state $\bigwedge^{p+1}W \otimes \mathrm{H}^1(X, F \otimes L^{q-1})$.}. But by the Vanishing Theorem \cite{green-koszul}, $\mathrm{K}_{p-s,1}(\PP^n; \mathcal{O}(-1), \mathcal{O}(d-1))=0$ for $p-s \geq \binom{d-2+n}{n}$. Note that $\mathrm{K}_{p-s,1}(\PP^n; \mathcal{O}(-1), \mathcal{O}(d-1))=\mathrm{K}_{p-s,0}(\PP^n; \mathcal{O}(d-2), \mathcal{O}(d-1))$, so this result is optimal by \cite[Remark 6.5]{ein-lazarsfeld-asymptotic}. But this gives
\begin{align*}
p &\geq \binom{d-2+n}{n}+s \\
&=\binom{d-2+n}{n}+\binom{d+n}{n}-\binom{d+n-1}{n}\\
&=\binom{d-2+n}{n}+\binom{d+n-1}{n-1}\\
&=\binom{d-1+n}{n}+\binom{d+n-1}{n-1}-\binom{d+n-2}{n-1}\\
&=\binom{d-1+n}{n}+\binom{d+n-2}{n-2}
\end{align*}
by Pascal's triangle, as required.
\end{proof}

We end this paper by considering Veronese syzygies in the other extreme, with $q=n$. In this case, one side of the vanishing statement is $$\mathrm{K}_{p,n}(\PP^n,\mathcal{O}_{\PP^n}(d)) = 0,$$ if 
$$p \geq \binom{d+n}{n}-n.$$ 
 By the Duality Theorem, \cite{green-koszul}, this statement is equivalent to 
$$\mathrm{K}_{\binom{d+n}{n}-n-1-p,1}(\PP^2,\mathcal{O}_{\PP^2}(-n-1);\mathcal{O}_{\PP^n}(d)) = 0$$
for $p \geq \binom{d+n}{n}-n$ which is trivial. So the vanishing in this case amounts to the following theorem of Ein--Lazrsfeld \cite[Remark 6.5]{ein-lazarsfeld-asymptotic} (in more generality), which we given a different proof of.
\begin{thm}[Ein--Lazarsfeld] \label{q=n-thm}
Fix $n \geq 2$. For $d \geq n+1$, then
$$\mathrm{K}_{p,n}(\PP^n,\mathcal{O}_{\PP^n}(d)) = 0$$
if 
$$p \leq \binom{d+n}{n}-\binom{d-1}{n}-n-1.$$
\end{thm}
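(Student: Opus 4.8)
The plan is to deduce Theorem \ref{q=n-thm} from the linear syzygy results already in hand, in exactly the way the ``top'' end of the $q=n$ vanishing was handled above. First I would invoke the Duality Theorem \cite{green-koszul}: since $\PP^n$ has no intermediate cohomology in any twist of $\mathcal{O}_{\PP^n}(d)$, the vanishing $\mathrm{K}_{p,n}(\PP^n,\mathcal{O}_{\PP^n}(d))=0$ is equivalent to
$$\mathrm{K}_{p,n}(\PP^n,\mathcal{O}_{\PP^n}(d))^{\vee}\;\cong\;\mathrm{K}_{\binom{d+n}{n}-n-1-p,1}\!\left(\PP^n,\mathcal{O}_{\PP^n}(-n-1);\mathcal{O}_{\PP^n}(d)\right)=0.$$
By Pascal's triangle the hypothesis $p\leq\binom{d+n}{n}-\binom{d-1}{n}-n-1$ is the same as $\binom{d+n}{n}-n-1-p\geq\binom{d-1}{n}=h^0(\PP^n,\mathcal{O}_{\PP^n}(d-n-1))=h^0(\PP^n,\mathcal{O}_{\PP^n}(-n-1)\otimes\mathcal{O}_{\PP^n}(d))$, so Theorem \ref{q=n-thm} is equivalent to the statement that $\mathrm{K}_{m,1}(\PP^n,\mathcal{O}_{\PP^n}(-n-1);\mathcal{O}_{\PP^n}(d))=0$ for every $m\geq h^0(\PP^n,\mathcal{O}_{\PP^n}(-n-1)\otimes\mathcal{O}_{\PP^n}(d))$.

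I would then appeal to the Vanishing Theorem \cite{green-koszul}, in precisely the same way it is used in the proof of Theorem \ref{main-thm} --- there for $\mathrm{K}_{m,1}(\PP^n;\mathcal{O}_{\PP^n}(-1),\mathcal{O}_{\PP^n}(d-1))=0$ when $m\geq h^0(\mathcal{O}_{\PP^n}(d-2))$ --- only now with twist $\mathcal{O}_{\PP^n}(-n-1)$ and polarization $\mathcal{O}_{\PP^n}(d)$. Concretely, since $\mathrm{H}^0(\PP^n,\mathcal{O}_{\PP^n}(d-n-1)\otimes\mathcal{O}_{\PP^n}(-d)^{\otimes j})=0$ for every $j\geq 1$, the trivial shift identifies $\mathrm{K}_{m,1}(\PP^n,\mathcal{O}_{\PP^n}(-n-1);\mathcal{O}_{\PP^n}(d))$ with $\mathrm{K}_{m,0}(\PP^n,\mathcal{O}_{\PP^n}(d-n-1);\mathcal{O}_{\PP^n}(d))$, and the Vanishing Theorem then gives the vanishing of this group for $m\geq h^0(\mathcal{O}_{\PP^n}(d-n-1))$. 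In contrast to the proof of Theorem \ref{main-thm} no projection step is needed: after dualizing one already sits in the range where Green's vanishing is sharp, which is exactly why the bound obtained here is the one conjectured by Ein--Lazarsfeld rather than one off by lower-order terms in $d$. For $n=2$ the required vanishing is in any case contained in Proposition \ref{plane-thm}, being equivalent via Duality to the Green--Birkenhake statement $\mathrm{K}_{p,2}(\PP^2,\mathcal{O}_{\PP^2}(d))=0$ for $p<3d-2$.

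The one genuine point to pin down is that the Vanishing Theorem really applies with the negative twist $\mathcal{O}_{\PP^n}(-n-1)$: one should check that the passage to the $0$-th strand is legitimate (using $\mathrm{H}^1(\PP^n,\mathcal{O}_{\PP^n}(-n-1))=0$ for $n\geq 2$ together with the vanishing of the negative self-twists above) and that Green's hypotheses --- integral variety, base-point-free polarization, module generated in degree $0$ --- are met for $\PP^n$ with $\mathcal{O}_{\PP^n}(d)$, as they are for every $d\geq n+1$. Everything else is the same Pascal's-triangle bookkeeping already carried out for Theorem \ref{main-thm}. I expect the only real difficulty to be expository: stating the version of Green's Vanishing Theorem one needs sharply enough that it visibly covers a polarization which is a large twist of the module, rather than the twist by $\mathcal{O}_{\PP^n}(1)$ occurring in the proof of Theorem \ref{main-thm}.
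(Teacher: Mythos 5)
Your argument is correct, and it reaches the theorem by a somewhat shorter route than the paper's. The paper follows Ottaviani--Paoletti's $n=2$ argument: it first restricts to a smooth hypersurface $X\in|\mathcal{O}_{\PP^n}(d)|$ via the Lefschetz Theorem, uses adjunction to write $\omega_X\simeq\mathcal{O}_X(d-n-1)$, and only then applies the Duality Theorem on the $(n-1)$-dimensional variety $X$, where $q=n=\dim X+1$ puts the dual group in the weight-$0$ strand at once; the final step is the same application of Green's Vanishing Theorem with bound $h^0(\mathcal{O}(d-n-1))=\binom{d-1}{n}$ that you use. You instead dualize directly on $\PP^n$ (the hypotheses of duality hold because all intermediate cohomology of line bundles on $\PP^n$ vanishes, as you say), landing in $\mathrm{K}_{m,1}(\PP^n,\mathcal{O}(-n-1);\mathcal{O}(d))$ with $m=\binom{d+n}{n}-n-1-p$, and then use the re-indexing $\mathrm{K}_{m,1}(X,B;L)=\mathrm{K}_{m,0}(X,B\otimes L;L)$; note this is an identity of Koszul complexes, so the $\mathrm{H}^1(\PP^n,\mathcal{O}_{\PP^n}(-n-1))=0$ check you flag is not actually needed --- only the vanishing $\mathrm{H}^0(\mathcal{O}_{\PP^n}(d-n-1-jd))=0$ for $j\geq 1$ matters if one insists on the nonnegatively graded section module, and you note it. Your route buys brevity, avoiding Lefschetz and adjunction entirely; the paper's route buys a direct generalization of the $n=2$ proof and keeps every twist effective before dualizing. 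Both hinge on exactly the same two inputs (Duality plus Green's Vanishing) and the same identification $h^0(\mathcal{O}_{\PP^n}(d-n-1))=\binom{d-1}{n}$, so the numerics agree with the stated bound $p\leq\binom{d+n}{n}-\binom{d-1}{n}-n-1$, as your $n=2$ cross-check against the Green--Birkenhake statement confirms.
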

\begin{proof}
Our proof of this theorem generalizes Ottaviani--Paoletti's proof of the statement in the special case $n=2$, \cite[Proposition 3.2]{ott-pao}. Note that this theorem requires $d\geq {n+1}$ to be stated so that $\binom{d-1}{n}$ has it's usual meaning.
The goal of this section is to prove Theorem \ref{q=n-thm}. 
Consider a smooth hypersurface $X \in |\mathcal{O}_{\PP^n}(d)|$. By the Lefschetz Theorem \cite{green-koszul}, it is equivalent to prove 
$$\mathrm{K}_{p,n}(X,\mathcal{O}_{X}(d)) = 0$$
for
$$p \leq \binom{d+n}{n}-\binom{d-1}{n}-n-1,$$
where $\mathcal{O}_{X}(d)$ denotes $\mathcal{O}_{\PP^n}(d)|_X$. Notice, by the adjunction formula, $$\omega_X \simeq \mathcal{O}_{X}(d-n-1).$$ By the duality theorem, whose assumptions clearly hold, we need
$$\mathrm{K}_{s-n-p,0}(X,\mathcal{O}_{X}(d-n-1); \mathcal{O}_{X}(d)) = 0$$
for $$s=h^0(\mathcal{O}_{X}(d))-1= \binom{d+n}{n}-1.$$
The claim now follows from the Vanishing Theorem  \cite{green-koszul}.
\end{proof}

\end{document}